\documentclass[12pt, reqno]{amsart}
\usepackage{amsmath, amsthm, amscd, amsfonts, amssymb, graphicx, color}
\usepackage[bookmarksnumbered, colorlinks, plainpages]{hyperref}

\input{mathrsfs.sty}
\textheight 22.5truecm \textwidth 14.5truecm
\setlength{\oddsidemargin}{0.35in}\setlength{\evensidemargin}{0.35in}
\setlength{\topmargin}{-.5cm}

\newtheorem{theorem}{Theorem}[section]
\theoremstyle{plain}

\newtheorem{proposition}[theorem]{Proposition}
\newtheorem{corollary}[theorem]{Corollary}
\newtheorem{lemma}[theorem]{Lemma}
\newtheorem{remark}[theorem]{Remark}
\newtheorem{example}[theorem]{Example}
\numberwithin{equation}{section}

\begin{document}
\title{Cartesian decomposition and Numerical radius inequalities}
\author[F. Kittaneh, M.S. Moslehian, T. Yamazaki]{Fuad Kittaneh$^{1}$,
Mohammad Sal Moslehian$^{2}$ and Takeaki Yamazaki$^{3}$}
\address{$^{1}$Department of Mathematics, The University of Jordan, Amman,
Jordan}
\email{fkitt@ju.edu.jo}
\address{$^{2}$ Department of Pure Mathematics, Center Of Excellence in
Analysis on Algebraic Structures (CEAAS), Ferdowsi University of Mashhad, P.
O. Box 1159, Mashhad 91775, Iran.}
\email{moslehian@um.ac.ir}
\address{$^{3}$ Department of Electrical, Electronic and Computer
Engineering, Toyo University, Kawagoe-Shi, Saitama, 350-8585 Japan.}
\email{e-mail: t-yamazaki@toyo.jp}
\subjclass[2010]{47A12, 47A30, 47A63, 47B47.}
\keywords{Numerical radius; positive operator; Cartesian decomposition; triangle inequality.}

\begin{abstract}
We show that if $T=H+iK$ is the Cartesian decomposition of $T\in \mathbb{B(\mathscr{H})}$, then for $\alpha ,\beta \in \mathbb{R}$, $\sup_{\alpha ^{2}+\beta
^{2}=1}\Vert \alpha H+\beta K\Vert =w(T)$. We then apply it to prove that if $A,B,X\in \mathbb{B(\mathscr{H})}$ and $0\leq mI\leq X$, then \begin{align*}
m\Vert \mbox{Re}(A)-\mbox{Re}(B)\Vert & \leq w(\mbox{Re}(A)X-X\mbox{Re}(B))
\\
& \leq \frac{1}{2}\sup_{\theta \in \mathbb{R}}\left\Vert (AX-XB)+e^{i\theta
}(XA-BX)\right\Vert \\
& \leq \frac{\Vert AX-XB\Vert +\Vert XA-BX\Vert }{2},
\end{align*}
where $\mbox{Re}(T)$ denotes the real part of an operator $T$. A refinement
of the triangle inequality is also shown.
\end{abstract}

\maketitle

\section{Introduction}

Let $\mathbb{B(\mathscr{H})}$ be the $C^{\ast }$-algebra of all bounded linear operators on a complex
Hilbert space $(\mathscr{H},\langle \cdot ,\cdot \rangle )$ and $I$ stand for the identity operator. If
$\dim \mathscr{H}=n$, we identify $\mathbb{B(\mathscr{H})}$ with the space $\mathcal{M}_{n}$ of all $n\times n$ matrices with entries
in the complex field and denote its identity by $I_{n}$. Any operator $T\in \mathbb{B(\mathscr{H})}$ can be represented as $T=H+iK$, the so-called Cartesian decomposition,
where $H=\mbox{Re}(T)=\frac{T+T^{\ast }}{2}$ and $K=\mbox{Im}(T)=\frac{%
T-T^{\ast }}{2i}$ are called the real and imaginary parts of $T$. An
operator $A\in\mathbb{B(\mathscr{H})}$ is called positive, denoted by $A\geq 0$, if $\langle Ax,x\rangle \geq 0$
for all $x\in\mathscr{H}$. For $p\geq 1$, the Schatten $p$-class, denoted by $\mathcal{C}_{p}$, is
defined to be the two-sided ideal in $\mathbb{B(\mathscr{H})}$ of those compact operators $A$ for which $\Vert A\Vert _{p}=\mbox{tr}%
(|A|^{p})^{1/p}$ is finite, where the symbol tr denotes the usual trace.
This norm as well as the usual operator norm $\Vert \cdot \Vert $ are
typical examples of unitarily invariant norms, i.e., a norm $|||\cdot |||$
defined on a two-sided ideal $\mathcal{C}_{||||\cdot |||}$ of $\mathbb{B(\mathscr{H})}$ satisfying $|||UAV|||=|||A|||$ for all $A\in \mathcal{C}_{||||\cdot |||}$
and all unitaries $U,V\in\mathbb{B(\mathscr{H})}$. The numerical radius of $A\in\mathbb{B(\mathscr{H})}$ is defined by
\begin{equation*}
w(A)=\sup \{|\langle Ax,x\rangle |:x\in\mathscr{H},\Vert x\Vert =1\}.
\end{equation*}
It is well known that $w(\cdot )$ defines a norm on $\mathbb{B(\mathscr{H})}$ such that for all $A\in\mathbb{B(\mathscr{H})}$,
\begin{equation}
\frac{1}{2}\Vert A\Vert \leq w(A)\leq \Vert A\Vert .  \label{1.1}
\end{equation}
If $A$ is self-adjoint, then $w(A)=\Vert A\Vert $ and if $A^{2}=0$, then $%
w(A)=\frac{\Vert A\Vert }{2}$(see e.g., \cite{7} and \cite{10}). Of course, $%
w(\cdot )$ is not unitarily invariant, rather it satisfies $w(U^{\ast
}AU)=w(A)$ for all $A$ and all unitary $U$ in $\mathbb{B(\mathscr{H})}$, i.e., $w(\cdot )$ is weakly unitarily invariant.

Some interesting numerical radius inequalities improving inequalities (\ref%
{1.1}) have been obtained by several mathematicians (see \cite{1}, \cite{6}, \cite{7},
\cite{15}, and references therein). Several investigations on norm and
numerical radius inequalities involving the Cartesian decomposition may be
found in the literature, among them we would like to refer the reader to
\cite{5} and \cite{8}.

In this note, we show that if $T=H+iK$ is the Cartesian decomposition of $%
T\in\mathbb{B(\mathscr{H})}$, then for $\alpha ,\beta \in \mathbb{R}$, $\sup_{\alpha ^{2}+\beta
^{2}=1}\Vert \alpha H+\beta K\Vert =w(T)$. We then apply it to find upper
and lower bounds for $w(\mbox{Re}(A)X-X\mbox{Re}(B))$, where $A,B,X\in\mathbb{B(\mathscr{H})}$ and $0\leq mI\leq X$. Furthermore, we present a refinement of the triangle
inequality.

\section{Results}

We start this section with a result concerning the Cartesian decomposition.

\begin{theorem}
\label{thm:1} Let $T=H+iK$ be the Cartesian decomposition of $T\in\mathbb{B(\mathscr{H})}$. Then for $\alpha ,\beta \in \mathbb{R}$,
\begin{equation}
\sup_{\alpha ^{2}+\beta ^{2}=1}\Vert \alpha H+\beta K\Vert =w(T).
\label{2.1}
\end{equation}
In particular,
\begin{equation}
\frac{1}{2}\Vert T+T^{\ast }\Vert \leq w(T)\quad \mbox{and}\quad \frac{1}{2}%
\Vert T-T^{\ast }\Vert \leq w(T).  \label{2.2}
\end{equation}
\end{theorem}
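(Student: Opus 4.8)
The plan is to prove the identity by establishing both inequalities in $\sup_{\alpha^2+\beta^2=1}\|\alpha H+\beta K\|=w(T)$. The key observation is that for a self-adjoint operator $\alpha H+\beta K$, its norm equals its numerical radius, so $\|\alpha H+\beta K\|=w(\alpha H+\beta K)=\sup_{\|x\|=1}|\langle(\alpha H+\beta K)x,x\rangle|$. Since $H$ and $K$ are self-adjoint, $\langle Hx,x\rangle$ and $\langle Kx,x\rangle$ are real, so $\langle(\alpha H+\beta K)x,x\rangle=\alpha\langle Hx,x\rangle+\beta\langle Kx,x\rangle$ is the real inner product of the vector $(\alpha,\beta)$ with $(\langle Hx,x\rangle,\langle Kx,x\rangle)$ in $\mathbb{R}^2$.

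First I would fix a unit vector $x$ and analyze the quantity $\sup_{\alpha^2+\beta^2=1}|\alpha\langle Hx,x\rangle+\beta\langle Kx,x\rangle|$. By the Cauchy--Schwarz inequality in $\mathbb{R}^2$ (attained by choosing $(\alpha,\beta)$ parallel to the real vector $(\langle Hx,x\rangle,\langle Kx,x\rangle)$), this supremum equals $\sqrt{\langle Hx,x\rangle^2+\langle Kx,x\rangle^2}$. The crucial identity is then
\begin{equation*}
\sqrt{\langle Hx,x\rangle^2+\langle Kx,x\rangle^2}=|\langle Hx,x\rangle+i\langle Kx,x\rangle|=|\langle(H+iK)x,x\rangle|=|\langle Tx,x\rangle|,
\end{equation*}
where the middle step uses that $\langle Hx,x\rangle$ and $\langle Kx,x\rangle$ are both real, so they serve as the genuine real and imaginary parts of the complex number $\langle Tx,x\rangle$.

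Next I would interchange the two suprema. We have
\begin{equation*}
\sup_{\alpha^2+\beta^2=1}\|\alpha H+\beta K\|=\sup_{\alpha^2+\beta^2=1}\ \sup_{\|x\|=1}|\alpha\langle Hx,x\rangle+\beta\langle Kx,x\rangle|,
\end{equation*}
and swapping the order of the suprema (legitimate since both are pointwise suprema of the same nonnegative function over a product set) gives $\sup_{\|x\|=1}\sup_{\alpha^2+\beta^2=1}|\alpha\langle Hx,x\rangle+\beta\langle Kx,x\rangle|=\sup_{\|x\|=1}|\langle Tx,x\rangle|=w(T)$, which is exactly \eqref{2.1}. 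The main point requiring care is justifying that $\|\alpha H+\beta K\|$ coincides with the numerical-radius expression, namely the self-adjointness of $\alpha H+\beta K$ for real $\alpha,\beta$; once that is in hand the rest is the elementary Cauchy--Schwarz optimization and the supremum interchange.

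Finally, to deduce \eqref{2.2} I would simply specialize. Taking $(\alpha,\beta)=(1,0)$ gives $\|H\|\le w(T)$, and since $H=\frac{T+T^\ast}{2}$ this reads $\frac{1}{2}\|T+T^\ast\|\le w(T)$; taking $(\alpha,\beta)=(0,1)$ gives $\|K\|\le w(T)$, and since $K=\frac{T-T^\ast}{2i}$ and $\|K\|=\frac{1}{2}\|T-T^\ast\|$ (the factor $i$ has modulus one) this yields $\frac{1}{2}\|T-T^\ast\|\le w(T)$. Both particular cases follow from the fact that the supremum over the unit circle dominates any single point on it.
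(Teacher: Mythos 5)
Your proof is correct and is essentially the paper's argument: the paper proves $w(T)=\sup_{\theta}\Vert \mbox{Re}(e^{i\theta}T)\Vert$ using exactly your two ingredients (self-adjointness gives $\Vert\cdot\Vert=w(\cdot)$, and the polar maximization $\sup_{\theta}\mbox{Re}(e^{i\theta}\langle Tx,x\rangle)=|\langle Tx,x\rangle|$ is your Cauchy--Schwarz step in $\mathbb{R}^2$), then identifies $\mbox{Re}(e^{i\theta}T)=(\cos\theta)H-(\sin\theta)K$. You have simply parametrized the unit circle by $(\alpha,\beta)$ directly instead of by $e^{i\theta}$ and made the interchange of suprema explicit.
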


\begin{proof}
First of all, we note that
\begin{equation}
w(T)=\sup_{\theta \in \mathbb{R}}\Vert \mbox{Re}(e^{i\theta }T)\Vert \,.
\label{y3}
\end{equation}
In fact, $\sup_{\theta \in \mathbb{R}}\mbox{Re}\left( e^{i\theta }\langle
Tx,x\rangle \right) =|\langle Tx,x\rangle |$ yields that
\begin{equation*}
\sup_{\theta \in \mathbb{R}}\Vert \mbox{Re}(e^{i\theta }T)\Vert
=\sup_{\theta \in \mathbb{R}}w(\mbox{Re}(e^{i\theta }T))=w(T).
\end{equation*}

On the other hand, let $T=H+iK$ be the Cartesian decomposition of $T$. Then
\begin{align}
\mbox{Re}(e^{i\theta }T)& =\frac{e^{i\theta }T+e^{-i\theta }T^{\ast }}{2}=%
\frac{1}{2}\{(\cos \theta +i\sin \theta )T+(\cos \theta -i\sin \theta
)T^{\ast }\}  \notag \\
& =\left( \cos \theta \right) \frac{T+T^{\ast }}{2}-\left( \sin \theta
\right) \frac{T-T^{\ast }}{2i}=\left( \cos \theta \right) H-\left( \sin
\theta \right) K.  \label{2.4}
\end{align}%
Therefore, by putting $\alpha =\cos \theta $ and $\beta =-\sin \theta $ in (%
\ref{2.4}), we obtain (\ref{2.1}). Especially, by setting $(\alpha ,\beta
)=(1,0)$ and $(\alpha ,\beta )=(0,1)$, we reach (\ref{2.2}).
\end{proof}


\begin{remark}
By using $($\ref{2.2}$)$, we get some known inequalities:

\begin{itemize}
\item[(i)] $\|T\|=\|H+iK\|\leq \|H\|+\|K\| \leq 2w(T)$. Hence we have $\frac{%
1}{2}\|T\|\leq w(T)$.

\item[(ii)] If $T=T^{*}$, then $T=H$. Hence we have $\|T\|=\|H\|\leq
w(T)\leq \|T\|$ and $w(T)=\|T\|$.

\item[(iii)] By easy calculation, we have $\frac{T^{\ast }T+TT^{\ast }}{2}%
=H^{2}+K^{2}$. Hence,%
\begin{equation*}
\frac{1}{4}\Vert T^{\ast }T+TT^{\ast }\Vert =\frac{1}{2}\Vert
H^{2}+K^{2}\Vert \leq \frac{1}{2}(\Vert H\Vert ^{2}+\Vert K\Vert ^{2})\leq
w^{2}(T)\text{ \ }(\text{see also \cite{12}}).
\end{equation*}

\item[(iv)] Let $\alpha ,\beta \in \mathbb{R}$ satisfy $\alpha ^{2}+\beta
^{2}=1$. Then for any unit vector $x\in\mathscr{H}$, we have
\begin{align*}
\Vert (\alpha H+\beta K)x\Vert & =\left\Vert
\begin{bmatrix}
H & K \\
0 & 0%
\end{bmatrix}
\begin{bmatrix}
\alpha x \\
\beta x%
\end{bmatrix}
\right\Vert \leq \left\Vert
\begin{bmatrix}
H & K \\
0 & 0%
\end{bmatrix}
\right\Vert =\left\Vert
\begin{bmatrix}
H & K \\
0 & 0%
\end{bmatrix}
\begin{bmatrix}
H & 0 \\
K & 0%
\end{bmatrix}
\right\Vert ^{\frac{1}{2}} \\
& =\Vert H^{2}+K^{2}\Vert ^{\frac{1}{2}}=\frac{1}{\sqrt{2}}\Vert T^{\ast
}T+TT^{\ast }\Vert ^{\frac{1}{2}}.
\end{align*}%
Hence we have $w^{2}(T)=\sup_{\alpha ^{2}+\beta ^{2}=1}\Vert \alpha H+\beta
K\Vert ^{2}\leq \frac{1}{2}\Vert T^{\ast }T+TT^{\ast }\Vert $ $($see also
\cite{12}$)$.
\end{itemize}
\end{remark}

We can obtain a refinement of the triangle inequality as follows.


\begin{theorem}
\label{triangle inequality} Let $A,B\in\mathbb{B(\mathscr{H})}$. Then
\begin{equation*}
\Vert A+B\Vert \leq 2w\left(
\begin{bmatrix}
0 & A \\
B^{\ast } & 0%
\end{bmatrix}
\right) \leq \Vert A\Vert +\Vert B\Vert .
\end{equation*}
\end{theorem}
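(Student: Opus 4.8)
The plan is to view $T=\begin{bmatrix} 0 & A \\ B^{\ast} & 0\end{bmatrix}$ as an operator on $\mathscr{H}\oplus\mathscr{H}$ and to extract both bounds from a single explicit formula for $w(T)$. The natural tool is the identity \eqref{y3} established in the proof of Theorem \ref{thm:1}, namely $w(T)=\sup_{\theta\in\mathbb{R}}\Vert\mathrm{Re}(e^{i\theta}T)\Vert$, so the first step is to compute $\mathrm{Re}(e^{i\theta}T)$. Since $T^{\ast}=\begin{bmatrix}0 & B \\ A^{\ast} & 0\end{bmatrix}$, a direct calculation gives $\mathrm{Re}(e^{i\theta}T)=\frac{1}{2}\begin{bmatrix}0 & C_{\theta} \\ C_{\theta}^{\ast} & 0\end{bmatrix}$, where $C_{\theta}=e^{i\theta}A+e^{-i\theta}B$.

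The key observation I would then invoke is that for any $C$ the off-diagonal self-adjoint operator matrix $\begin{bmatrix}0 & C \\ C^{\ast} & 0\end{bmatrix}$ has norm exactly $\Vert C\Vert$; this follows by squaring it to obtain $\mathrm{diag}(CC^{\ast},C^{\ast}C)$ and using $\Vert S\Vert^{2}=\Vert S^{2}\Vert$ for self-adjoint $S$. Applying this with $C=C_{\theta}$ turns \eqref{y3} into
\[
2w(T)=\sup_{\theta\in\mathbb{R}}\Vert e^{i\theta}A+e^{-i\theta}B\Vert=\sup_{\phi\in\mathbb{R}}\Vert A+e^{i\phi}B\Vert,
\]
the last equality coming from factoring out the unimodular scalar $e^{i\theta}$ and reparametrizing by $\phi=-2\theta$.

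Both desired inequalities are now immediate from this formula. For the lower bound I would specialize to $\phi=0$, obtaining $\Vert A+B\Vert\le\sup_{\phi}\Vert A+e^{i\phi}B\Vert=2w(T)$. For the upper bound I would apply the triangle inequality uniformly in $\phi$, namely $\Vert A+e^{i\phi}B\Vert\le\Vert A\Vert+\Vert e^{i\phi}B\Vert=\Vert A\Vert+\Vert B\Vert$, and take the supremum over $\phi$.

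I expect no serious obstacle here: the only genuine content is recognizing that \eqref{y3} reduces the numerical radius of this $2\times 2$ operator matrix to the supremum of the norms $\Vert A+e^{i\phi}B\Vert$, after which one endpoint and the triangle inequality respectively handle the two sides. One could alternatively derive the lower bound directly from \eqref{2.2}, since $\frac{1}{2}\Vert T+T^{\ast}\Vert=\frac{1}{2}\Vert A+B\Vert$ by the same off-diagonal norm identity, but the unified formula above is cleaner and yields both bounds at once.
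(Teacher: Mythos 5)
Your proposal is correct and follows essentially the same route as the paper: both reduce $2w(T)$ via \eqref{y3} and the identity $\left\Vert\begin{bmatrix}0 & C\\ C^{\ast} & 0\end{bmatrix}\right\Vert=\Vert C\Vert$ to the formula $\sup_{\theta}\Vert e^{i\theta}A+e^{-i\theta}B\Vert$, then get the lower bound from a single value of $\theta$ (equivalently, from \eqref{2.2}) and the upper bound from the triangle inequality. The only cosmetic difference is your reparametrization by $\phi=-2\theta$, which the paper does not bother with.
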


\begin{proof}
Let $T=%
\begin{bmatrix}
0 & A \\
B^{\ast } & 0%
\end{bmatrix}
$ on $\mathscr{H}\oplus\mathscr{H}$. Then by \eqref{2.2} and \eqref{y3}, we have
\begin{align*}
\Vert A+B\Vert & =\Vert T+T^{\ast }\Vert  \\
& \leq 2w(T) \\
& =\sup_{\theta \in \mathbb{R}}2\Vert \mbox{Re}(e^{i\theta }T)\Vert  \\
& =\sup_{\theta \in \mathbb{R}}\left\Vert
\begin{bmatrix}
0 & e^{i\theta }A+e^{-i\theta }B \\
e^{-i\theta }A^{\ast }+e^{i\theta }B^{\ast } & 0%
\end{bmatrix}
\right\Vert  \\
& =\sup_{\theta \in \mathbb{R}}\Vert e^{i\theta }A+e^{-i\theta }B\Vert \text{
(since }\left\Vert \left[
\begin{array}{cc}
0 & C \\
C^{\ast } & 0%
\end{array}
\right] \right\Vert =\left\Vert C\right\Vert \text{)} \\
& \leq \Vert A\Vert +\Vert B\Vert .
\end{align*}
\end{proof}

Thus we observe that equality occurs in the triangle inequality for the
operator norm if and only if the two equalities
\begin{equation*}
w\left(%
\begin{bmatrix}
0 & A \\
B^{*} & 0%
\end{bmatrix}
\right)+w\left(%
\begin{bmatrix}
0 & B \\
A^{*} & 0%
\end{bmatrix}
\right)=w\left(%
\begin{bmatrix}
0 & A+B \\
A^{*}+B^{*} & 0%
\end{bmatrix}
\right)
\end{equation*}
and
\begin{equation*}
w\left(%
\begin{bmatrix}
0 & A \\
B^{*} & 0%
\end{bmatrix}
\right)=w\left(%
\begin{bmatrix}
0 & A \\
0 & 0%
\end{bmatrix}
\right)+w\left(%
\begin{bmatrix}
0 & 0 \\
B^{*} & 0%
\end{bmatrix}
\right)
\end{equation*}
occur in the triangle inequality for the norm $w(\cdot)$.


\begin{example}
Let $A=%
\begin{bmatrix}
1 & 1 \\
0 & 1%
\end{bmatrix}
$, $B=%
\begin{bmatrix}
0 & -1 \\
0 & 0%
\end{bmatrix}
,$ and $T=%
\begin{bmatrix}
0 & A \\
B^{\ast } & 0%
\end{bmatrix}
$. Then $\Vert A+B\Vert <2w(T)<\Vert A\Vert +\Vert B\Vert .$

To see this, we need the fact \cite{3} that "if $A,B\in\mathbb{B(\mathscr{H})}$ are non-zero, then the equation $\Vert A+B\Vert =\Vert A\Vert +\Vert
B\Vert $ holds if and only if $\Vert A\Vert \Vert B\Vert \in \overline{%
W(A^{\ast }B)}$".

It is clear that $\Vert A+B\Vert =1$. Let $x=\frac{1}{2}[i\ 1\ 1\ 1]^{t}\in
\mathbb{C}^{4}$. Then $\Vert x\Vert =1$ and $|\langle Tx,x\rangle |=\frac{%
\sqrt{10}}{4}$. Hence $\Vert A+B\Vert =1<\frac{\sqrt{10}}{2}\leq 2w(T).$ On
the other hand, assume that $2w(T)=\Vert A\Vert +\Vert B\Vert $. Then by
Theorem \ref{triangle inequality}, there exists $\theta \in \mathbb{R}$ such
that $\Vert e^{i\theta }A+e^{-i\theta }B\Vert =\Vert A\Vert +\Vert B\Vert $.
So, by the above fact, we have $\Vert A\Vert \Vert B\Vert \in \overline{%
W(e^{-2i\theta }A^{\ast }B)}$, and it is equivalent to $e^{2i\theta }\Vert
A\Vert \Vert B\Vert \in \overline{W(A^{\ast }B)}$. Since $w(A^{\ast }B)\leq
\Vert A^{\ast }B\Vert \leq \Vert A\Vert \Vert B\Vert $, we have $w(A^{\ast
}B)=\Vert A\Vert \Vert B\Vert $. However, $\Vert A\Vert \Vert B\Vert =\frac{%
3+\sqrt{5}}{2}$ and
\begin{equation*}
w(A^{\ast }B)=w\left(
\begin{bmatrix}
0 & -1 \\
0 & -1%
\end{bmatrix}
\right) =\frac{1+\sqrt{2}}{2}
\end{equation*}
$($see e.g., \cite[\textit{Example} 3\textit{\ in Section} 2.5.1]{9}$).$
Hence $w(A^{\ast }B)<\Vert A\Vert \Vert B\Vert $, which leads to a
contradiction. Hence $2w(T)=\sup_{\theta \in \mathbb{R}}\Vert e^{i\theta
}A+e^{-i\theta }B\Vert <\Vert A\Vert +\Vert B\Vert ,$ and so the
inequalities in Theorem \ref{triangle inequality} can be strict.
\end{example}

The following lemma is known in the literature.

\begin{lemma}
\cite[Lemma 3.1]{14}\label{vA} Let $X\geq mI>0$ for some positive real
number $m$ and $Y$ be in the associate ideal corresponding to a unitarily
invariant norm $|||\cdot |||$. Then
\begin{equation*}
m|||Y|||\leq \frac{1}{2}|||YX+XY|||\,.
\end{equation*}
\end{lemma}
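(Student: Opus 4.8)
The plan is to represent $Y$ explicitly in terms of the symmetrized product $YX+XY$ via an integral formula and then estimate directly. When $X>0$, the Sylvester-type equation $XY+YX=Z$ has a unique solution, given by $Y=\int_{0}^{\infty}e^{-tX}Z\,e^{-tX}\,dt$. To verify this I would differentiate the integrand, using that $e^{-tX}$ commutes with $X$, to obtain $\frac{d}{dt}\bigl(e^{-tX}Z\,e^{-tX}\bigr)=-\bigl(Xe^{-tX}Ze^{-tX}+e^{-tX}Ze^{-tX}X\bigr)$; integrating from $0$ to $\infty$ and using $e^{-tX}\to 0$ as $t\to\infty$ (this is where $X\geq mI>0$ first enters) recovers $Z$ from the boundary term at $t=0$. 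Injectivity of the map $Y\mapsto XY+YX$ for $X>0$ then identifies the given $Y$ with $\int_{0}^{\infty}e^{-tX}(YX+XY)e^{-tX}\,dt$.

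With this representation at hand, I would bound the unitarily invariant norm by moving $|||\cdot|||$ inside the integral through the triangle inequality and then applying the standard estimate $|||AWB|||\leq\|A\|\,|||W|||\,\|B\|$ valid for any unitarily invariant norm, which gives $|||Y|||\leq\int_{0}^{\infty}\|e^{-tX}\|^{2}\,|||YX+XY|||\,dt$. The decisive quantitative input is $\|e^{-tX}\|\leq e^{-mt}$: since $X\geq mI$ the spectrum of $X$ lies in $[m,\infty)$, so by the spectral mapping theorem the positive operator $e^{-tX}$ has norm at most $e^{-mt}$. Substituting and evaluating $\int_{0}^{\infty}e^{-2mt}\,dt=\frac{1}{2m}$ yields $|||Y|||\leq\frac{1}{2m}|||YX+XY|||$, which is the claim after multiplication by $m$.

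I expect the only genuinely delicate point to be justifying that the integral converges in the associate ideal $\mathcal{C}_{|||\cdot|||}$ and that the norm may be interchanged with the integral. Since $Y$ lies in the ideal and the ideal is two-sided, $Z=YX+XY$ lies in it as well, and the integrand satisfies $|||e^{-tX}Z\,e^{-tX}|||\leq e^{-2mt}|||Z|||$, which is integrable on $(0,\infty)$; hence the Bochner integral converges in the Banach space $(\mathcal{C}_{|||\cdot|||},|||\cdot|||)$ and the integral form of the triangle inequality is legitimate. This is exactly the step in which completeness of the ideal and the \emph{strict} positivity $X\geq mI>0$, rather than merely $X>0$, are both needed.
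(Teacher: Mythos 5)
The paper offers no proof of this lemma: it is imported verbatim from van Hemmen--Ando \cite[Lemma 3.1]{14} and used as a black box, so there is nothing internal to compare your argument against. On its own merits your proof is correct, and it is in fact the classical argument for this result: represent the solution of the Lyapunov equation $XY+YX=Z$ as $Y=\int_{0}^{\infty}e^{-tX}Z\,e^{-tX}\,dt$, then combine the triangle inequality for the Bochner integral with $|||AWB|||\leq\|A\|\,|||W|||\,\|B\|$ and $\|e^{-tX}\|\leq e^{-mt}$ to get $|||Y|||\leq\bigl(\int_{0}^{\infty}e^{-2mt}\,dt\bigr)|||XY+YX|||=\frac{1}{2m}|||XY+YX|||$. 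All the ingredients you invoke are standard and correctly deployed, including the convergence discussion, where the spectral gap $X\geq mI$ is genuinely needed for $\|e^{-tX}\|\to 0$ in norm. One small streamlining: you can bypass the separate injectivity step by differentiating $t\mapsto e^{-tX}Y e^{-tX}$ for the \emph{given} $Y$, which yields $-e^{-tX}(XY+YX)e^{-tX}$ and, after integration, the identity $Y=\int_{0}^{\infty}e^{-tX}(XY+YX)e^{-tX}\,dt$ directly, with no appeal to uniqueness of solutions of the Sylvester equation.
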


The next assertion is interesting on its own right (see also \cite{4}).

\begin{proposition}
\label{l1}Let $A,B\in\mathbb{B(\mathscr{H})}$ be self-adjoint and $0<mI\leq X$ for some positive real number $m$. Then
\begin{equation}
m\Vert A-B\Vert \leq w(AX-XB)\leq \Vert AX-XB\Vert .  \label{2.5}
\end{equation}
\end{proposition}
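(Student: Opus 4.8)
The plan is to dispatch the upper inequality immediately and then reduce the lower inequality to Lemma~\ref{vA} through the Cartesian-decomposition bound~(\ref{2.2}). The upper bound $w(AX-XB)\leq \|AX-XB\|$ is nothing but the general estimate~(\ref{1.1}), so no work is needed there.

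For the lower bound, the first thing I would record is that the self-adjointness of $A$, $B$, and $X$ controls the adjoint of $T:=AX-XB$. Indeed, since $A^{*}=A$, $B^{*}=B$, and $X^{*}=X$, we have $T^{*}=XA-BX$, and hence
\[
T+T^{*}=(AX+XA)-(XB+BX)=(A-B)X+X(A-B).
\]
Writing $Y:=A-B$, which is again self-adjoint, this reads $T+T^{*}=YX+XY$.

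Next I would invoke the first inequality in~(\ref{2.2}), namely $\frac{1}{2}\|T+T^{*}\|\leq w(T)$, to obtain $\frac{1}{2}\|YX+XY\|\leq w(AX-XB)$. Finally, applying Lemma~\ref{vA} (with the operator norm playing the role of the unitarily invariant norm) to the self-adjoint $Y$ gives $m\|Y\|\leq \frac{1}{2}\|YX+XY\|$, and chaining these two estimates yields $m\|A-B\|\leq w(AX-XB)$, as required.

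The argument has no serious obstacle once the key algebraic identity is spotted: the decisive step is recognizing that $T+T^{*}$ collapses to the symmetric expression $YX+XY$ with $Y=A-B$, which is exactly the shape Lemma~\ref{vA} is built to handle. The only point requiring care is confirming that the hypotheses of Lemma~\ref{vA} are met---that $X\geq mI>0$ (given) and that $Y$ lies in the associate ideal, which is automatic for the operator norm---so that the lemma applies verbatim.
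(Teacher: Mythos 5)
Your proof is correct and follows essentially the same route as the paper: set $T=AX-XB$, observe $T+T^{*}=(A-B)X+X(A-B)$, apply Lemma~\ref{vA} to get $m\Vert A-B\Vert\leq\frac{1}{2}\Vert T+T^{*}\Vert$, and conclude via the bound $\frac{1}{2}\Vert T+T^{*}\Vert\leq w(T)$ from \eqref{2.2}, with the upper estimate coming from \eqref{1.1}. No differences worth noting.
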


\begin{proof}
Let $T=AX-XB$. Then $T+T^{\ast }=(A-B)X+X(A-B)$. It follows from Lemma \ref%
{vA} that
\begin{equation*}
m\Vert A-B\Vert \leq \frac{1}{2}\Vert (A-B)X+X(A-B)\Vert =\frac{1}{2}\Vert
T+T^{\ast }\Vert \leq w(T)=w(AX-XB).
\end{equation*}
The second inequality of (\ref{2.5}) follows from \eqref{1.1}.
\end{proof}

Proposition \ref{l1} improves Theorem 1 in \cite{4} for the usual operator
norm, which says that $m\Vert A-B\Vert \leq \Vert AX-XB\Vert $.

In the setting of matrices, it is known that for $A\in \mathcal{M}_{n},$ we
have%
\begin{equation*}
\Vert A\Vert \leq \Vert A\Vert _{p},
\end{equation*}
and so%
\begin{equation*}
w(A)\leq \Vert A\Vert _{p}\,.
\end{equation*}
Using (\ref{2.5}) and the fact that $\Vert A\Vert _{p}\leq \Vert A\Vert
\,\Vert I_{n}\Vert _{p},$ we infer the next result.

\begin{proposition}
Let $A,B,X\in \mathbb{M}_{n}$ be Hermitian and $0<mI_{n}\leq X$ for some
positive real number $m$. Then $\frac{m}{n^{1/p}}\Vert A-B\Vert _{p}\leq
w(AX-XB)\leq \Vert AX-XB\Vert _{p}\,.$
\end{proposition}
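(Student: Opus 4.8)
The plan is to derive both bounds directly from Proposition~\ref{l1}, combined with the two elementary comparisons between the operator norm and the Schatten $p$-norm recorded just above the statement, namely $\Vert C\Vert \leq \Vert C\Vert_p$ and $\Vert C\Vert_p \leq \Vert C\Vert\,\Vert I_n\Vert_p$, valid for every $C\in\mathcal{M}_n$. Since Hermitian matrices are precisely the self-adjoint elements of $\mathcal{M}_n$ and we are assuming $0<mI_n\leq X$, the hypotheses of Proposition~\ref{l1} hold, so I am free to use the chain
\[
m\Vert A-B\Vert \leq w(AX-XB)\leq \Vert AX-XB\Vert .
\]

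For the upper bound I would simply apply the inequality $w(C)\leq \Vert C\Vert_p$, which itself follows from $w(C)\leq \Vert C\Vert\leq \Vert C\Vert_p$, to the choice $C=AX-XB$; this gives $w(AX-XB)\leq \Vert AX-XB\Vert_p$ immediately, with no further work.

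For the lower bound the only additional ingredient is the evaluation of $\Vert I_n\Vert_p$. Since $I_n$ is positive, $|I_n|=I_n$, so $\Vert I_n\Vert_p=(\mbox{tr}(|I_n|^p))^{1/p}=(\mbox{tr}(I_n))^{1/p}=n^{1/p}$. Feeding this into $\Vert A-B\Vert_p \leq \Vert A-B\Vert\,\Vert I_n\Vert_p$ yields $\Vert A-B\Vert \geq n^{-1/p}\Vert A-B\Vert_p$, and multiplying the left inequality of Proposition~\ref{l1} accordingly gives
\[
\frac{m}{n^{1/p}}\Vert A-B\Vert_p \leq m\Vert A-B\Vert \leq w(AX-XB).
\]

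I do not anticipate a genuine obstacle: the statement is just the Schatten-$p$ analogue of Proposition~\ref{l1}, obtained by inserting the dimension-dependent equivalence constants $1$ and $n^{1/p}$ between $\Vert\cdot\Vert$ and $\Vert\cdot\Vert_p$. The only point requiring a moment of care is keeping track of the directions of the two comparisons together with the identification $\Vert I_n\Vert_p=n^{1/p}$, so that the factor $n^{-1/p}$ ends up on the correct side of the chain.
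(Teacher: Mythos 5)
Your proof is correct and is essentially identical to the paper's argument: the paper likewise derives the result by combining Proposition~\ref{l1} with the comparisons $\Vert C\Vert \leq \Vert C\Vert_{p}$ and $\Vert C\Vert_{p}\leq \Vert C\Vert\,\Vert I_{n}\Vert_{p}=n^{1/p}\Vert C\Vert$. Your explicit computation of $\Vert I_{n}\Vert_{p}=n^{1/p}$ and the careful tracking of inequality directions match what the paper leaves implicit.
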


An extension of Proposition \ref{l1} to arbitrary (i.e., not necessarily
self-adjoint) operators $A$, $B$ can be stated as follows.

\begin{theorem}
\label{T2.8}Let $A,B\in\mathbb{B(\mathscr{H})}$ and $0<mI\leq X$ for some positive real number $m$. Then
\begin{equation}
m\Vert A-B\Vert \leq w\left( \left[
\begin{array}{cc}
0 & AX-XB \\
A^{\ast }X-XB^{\ast } & 0%
\end{array}
\right] \right) \leq \frac{\Vert AX-XB\Vert +\Vert A^{\ast }X-XB^{\ast
}\Vert }{2}.  \label{2.6}
\end{equation}
\end{theorem}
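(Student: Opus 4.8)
The plan is to split the two inequalities in \eqref{2.6} and dispatch each with one of the results already established: the right-hand (upper) bound will follow directly from the triangle inequality of Theorem \ref{triangle inequality}, while the left-hand (lower) bound will follow from Proposition \ref{l1} after passing to a self-adjoint dilation of $A$, $B$, and $X$ on $\mathscr{H}\oplus\mathscr{H}$.

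For the upper bound, I would set $P=AX-XB$ and $Q=XA-BX$, so that the operator in \eqref{2.6} is exactly $\left[\begin{smallmatrix} 0 & P \\ Q^{\ast} & 0 \end{smallmatrix}\right]$; indeed $Q^{\ast}=(XA-BX)^{\ast}=A^{\ast}X-XB^{\ast}$ because $X^{\ast}=X$. Theorem \ref{triangle inequality} then gives $2w\left(\begin{bmatrix} 0 & P \\ Q^{\ast} & 0 \end{bmatrix}\right)\le\|P\|+\|Q\|$, and since $\|Q\|=\|XA-BX\|=\|(XA-BX)^{\ast}\|=\|A^{\ast}X-XB^{\ast}\|$, this is precisely the claimed bound $w(T)\le\tfrac12(\|AX-XB\|+\|A^{\ast}X-XB^{\ast}\|)$. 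Alternatively, the same bound can be read off from \eqref{y3} by computing $\mathrm{Re}(e^{i\theta}T)$ blockwise and observing that its norm equals $\tfrac12\|e^{i\theta}P+e^{-i\theta}Q^{\ast}\|$.

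For the lower bound, the idea is to form the self-adjoint operators $\widetilde A=\begin{bmatrix} 0 & A \\ A^{\ast} & 0 \end{bmatrix}$ and $\widetilde B=\begin{bmatrix} 0 & B \\ B^{\ast} & 0 \end{bmatrix}$ together with $\widetilde X=\begin{bmatrix} X & 0 \\ 0 & X \end{bmatrix}$ on $\mathscr{H}\oplus\mathscr{H}$, where $\widetilde A,\widetilde B$ are self-adjoint and $\widetilde X\ge mI$. A direct block multiplication yields $\widetilde A\widetilde X-\widetilde X\widetilde B=\begin{bmatrix} 0 & AX-XB \\ A^{\ast}X-XB^{\ast} & 0 \end{bmatrix}=T$, the very operator appearing in \eqref{2.6}. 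Applying Proposition \ref{l1} to the self-adjoint pair $\widetilde A,\widetilde B$ and to $\widetilde X$ gives $m\|\widetilde A-\widetilde B\|\le w(\widetilde A\widetilde X-\widetilde X\widetilde B)=w(T)$, and since $\|\widetilde A-\widetilde B\|=\left\|\begin{bmatrix} 0 & A-B \\ (A-B)^{\ast} & 0 \end{bmatrix}\right\|=\|A-B\|$, the left-hand inequality of \eqref{2.6} follows.

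I expect the only genuinely non-routine step to be spotting the self-adjoint dilation that converts the non-self-adjoint data $A,B$ into the self-adjoint pair $\widetilde A,\widetilde B$ while reproducing the exact off-diagonal operator $T$; once that identity is in place, both halves reduce to invoking Theorem \ref{triangle inequality} and Proposition \ref{l1}, together with the elementary fact that $\left\|\begin{bmatrix} 0 & M \\ M^{\ast} & 0 \end{bmatrix}\right\|=\|M\|$.
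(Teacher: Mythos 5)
Your proof is correct and follows essentially the same route as the paper: the lower bound is obtained by exactly the same self-adjoint dilation $\widetilde A,\widetilde B,\widetilde X$ on $\mathscr{H}\oplus\mathscr{H}$ followed by Proposition \ref{l1}. The only cosmetic difference is in the upper bound, where you invoke Theorem \ref{triangle inequality} with $P=AX-XB$ and $Q=XA-BX$, while the paper splits the block operator into its two nilpotent corners and uses the triangle inequality for $w(\cdot)$ together with $w\left(\left[\begin{smallmatrix}0&C\\0&0\end{smallmatrix}\right]\right)=\frac{\Vert C\Vert}{2}$; the two arguments yield the same estimate.
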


\begin{proof}
Applying Proposition \ref{l1} to the self-adjoint operators $\hat{A}=\left[
\begin{array}{cc}
0 & A \\
A^{\ast } & 0%
\end{array}
\right] ,$ $\hat{B}=\left[
\begin{array}{cc}
0 & B \\
B^{\ast } & 0%
\end{array}
\right] $, and the positive operator $\hat{X}=\left[
\begin{array}{cc}
X & 0 \\
0 & X%
\end{array}
\right] $ on $\mathscr{H}\mathcal{\oplus }\mathscr{H}$, we have%
\begin{equation*}
m\Vert \hat{A}-\hat{B}\Vert \leq w\left( \hat{A}\hat{X}-\hat{X}\hat{B}%
\right) .
\end{equation*}
Thus,%
\begin{equation*}
m\Vert A-B\Vert \leq w\left( \left[
\begin{array}{cc}
0 & AX-XB \\
A^{\ast }X-XB^{\ast } & 0%
\end{array}
\right] \right) .
\end{equation*}
This proves the first inequality in (\ref{2.6}). To prove the second
inequality in (\ref{2.6}), use the triangle inequality for $w(\cdot )$ and
the fact that $w\left( \left[
\begin{array}{cc}
0 & AX-XB \\
0 & 0%
\end{array}
\right] \right) =\frac{\Vert AX-XB\Vert }{2}$ and $w\left( \left[
\begin{array}{cc}
0 & 0 \\
A^{\ast }X-XB^{\ast } & 0%
\end{array}
\right] \right) =\frac{\Vert A^{\ast }X-XB^{\ast }\Vert }{2}.$
\end{proof}

It should be mentioned here that (\ref{2.5}) follows as a special case of (%
\ref{2.6}) by recalling that $w\left( \left[
\begin{array}{cc}
0 & C \\
C & 0%
\end{array}
\right] \right) =w(C)$ (see, e.g., \cite{11}).

It follows from Corollary 5 in \cite{4} that if $U, V\in\mathbb{B(\mathscr{H})}$ are unitary and $0<mI\leq X$ for some positive real number $m$, then%
\begin{equation}
m\Vert U-V\Vert \leq \left\Vert UX-XV\right\Vert .  \label{2.7}
\end{equation}
A refinement of (\ref{2.7}) is given in the following corollary.

\begin{corollary}
\label{C2.4}Let $U,V\in\mathbb{B(\mathscr{H})}$ be unitary and $0<mI\leq X$ for some positive real number $m$. Then%
\begin{equation}
m\Vert U-V\Vert \leq w\left( \left[
\begin{array}{cc}
0 & UX-XV \\
U^{\ast }X-XV^{\ast } & 0%
\end{array}
\right] \right) \leq \left\Vert UX-XV\right\Vert .  \label{2.8}
\end{equation}
\end{corollary}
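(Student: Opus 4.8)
The plan is to read this off directly from Theorem~\ref{T2.8}. Specializing (\ref{2.6}) to the case $A=U$, $B=V$ gives at once
\begin{equation*}
m\Vert U-V\Vert \leq w\left( \left[
\begin{array}{cc}
0 & UX-XV \\
U^{\ast }X-XV^{\ast } & 0
\end{array}
\right] \right) \leq \frac{\Vert UX-XV\Vert +\Vert U^{\ast }X-XV^{\ast }\Vert }{2}.
\end{equation*}
The first inequality of (\ref{2.8}) is therefore immediate, and the whole task reduces to improving the right-hand estimate from an average of two norms to the single norm $\Vert UX-XV\Vert$.

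To achieve this I would exploit the unitarity of $U$ and $V$, which has not yet been used beyond boundedness. The key is the algebraic identity
\begin{equation*}
U^{\ast }X-XV^{\ast }=-\,U^{\ast }(UX-XV)V^{\ast },
\end{equation*}
verified by expanding the right-hand side and invoking $U^{\ast }U=VV^{\ast }=I$ together with $X=X^{\ast }$. Since $U^{\ast }$ and $V^{\ast }$ are themselves unitary and the operator norm is unitarily invariant, this identity forces
\begin{equation*}
\Vert U^{\ast }X-XV^{\ast }\Vert =\Vert UX-XV\Vert .
\end{equation*}
Substituting this equality into the average on the right collapses it to $\Vert UX-XV\Vert$, which is exactly the second inequality of (\ref{2.8}).

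I do not anticipate a genuine obstacle here: essentially all the content is carried by Theorem~\ref{T2.8}, and the only new observation is the norm identity above. The single point requiring care is getting the factorization correct — in particular remembering that $X$ is self-adjoint so that no adjoint on $X$ survives — but once the right conjugation by the unitaries $U^{\ast }$ and $V^{\ast }$ is spotted, unitary invariance of $\Vert\cdot\Vert$ finishes the argument.
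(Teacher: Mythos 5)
Your proof is correct and takes essentially the same route as the paper: the first inequality is read off from Theorem~\ref{T2.8} with $A=U$, $B=V$, and the second follows from the identity $U^{\ast}X-XV^{\ast}=U^{\ast}(XV-UX)V^{\ast}$ together with unitary invariance of $\Vert\cdot\Vert$, which collapses the average to a single norm. (A minor remark: self-adjointness of $X$ is not actually needed for that identity; only $U^{\ast}U=VV^{\ast}=I$ is used.)
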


\begin{proof}
The second inequality in (\ref{2.8}) follows from by the unitary invariance
of $\left\Vert \cdot \right\Vert $. In fact, $\Vert U^{\ast }X-XV^{\ast
}\Vert =\left\Vert U^{\ast }\left( XV-UX\right) V^{\ast }\right\Vert
=\left\Vert XV-UX\right\Vert =\left\Vert UX-XV\right\Vert $.
\end{proof}

Finally, we present another extension of Proposition \ref{l1} to arbitrary
operators $A,B$. To achieve it, we need the following lemma. It immediately
follows from the relations
\begin{eqnarray*}
w(X+Y) &=&w\left(
\begin{bmatrix}
0 & X+Y \\
X+Y & 0%
\end{bmatrix}
\right) \leq w\left(
\begin{bmatrix}
0 & X \\
Y & 0%
\end{bmatrix}
\right) +w\left(
\begin{bmatrix}
0 & Y \\
X & 0%
\end{bmatrix}
\right) \\
&=&2w\left(
\begin{bmatrix}
0 & X \\
Y & 0%
\end{bmatrix}
\right) .
\end{eqnarray*}

\begin{lemma}
\cite[Theorem 2.4]{11}\label{nneeww} If $X,Y\in
\mathbb{B(\mathscr{H})}$, then $w(X+Y)\leq 2w\left(
\begin{bmatrix}
0 & X \\
Y & 0%
\end{bmatrix}
\right) $.
\end{lemma}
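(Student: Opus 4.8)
The plan is to reduce the claimed bound on $w(X+Y)$ to a statement about a single off-diagonal block operator, using the identity $w\left(\left[\begin{array}{cc} 0 & C \\ C & 0 \end{array}\right]\right)=w(C)$ that was recalled in the discussion following Theorem~\ref{T2.8} (see \cite{11}). First I would apply this identity with $C=X+Y$ to rewrite the left-hand side as $w(X+Y)=w\left(\left[\begin{array}{cc} 0 & X+Y \\ X+Y & 0 \end{array}\right]\right)$, thereby transferring the whole problem to the $2\times 2$ block setting on $\mathscr{H}\oplus\mathscr{H}$.

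Next I would split the central block operator into two off-diagonal pieces,
\[
\left[\begin{array}{cc} 0 & X+Y \\ X+Y & 0 \end{array}\right]
=\left[\begin{array}{cc} 0 & X \\ Y & 0 \end{array}\right]
+\left[\begin{array}{cc} 0 & Y \\ X & 0 \end{array}\right],
\]
and apply the triangle inequality for the numerical radius norm $w(\cdot)$ (recall from the Introduction that $w(\cdot)$ is a norm) to bound the numerical radius of the sum by the sum of the two numerical radii.

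The final step is to observe that the two summands share the same numerical radius. Indeed, they are weakly unitarily equivalent: conjugating $\left[\begin{array}{cc} 0 & X \\ Y & 0 \end{array}\right]$ by the self-adjoint flip unitary $J=\left[\begin{array}{cc} 0 & I \\ I & 0 \end{array}\right]$ produces $\left[\begin{array}{cc} 0 & Y \\ X & 0 \end{array}\right]$, and since $w(U^{\ast}AU)=w(A)$ for every unitary $U$, the two numerical radii coincide. Substituting this back yields $w(X+Y)\leq 2\,w\left(\left[\begin{array}{cc} 0 & X \\ Y & 0 \end{array}\right]\right)$, which is exactly the asserted inequality.

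I do not anticipate a genuine obstacle, since every ingredient is elementary and already available in the excerpt; the only point demanding a moment of care is the block computation confirming $J\left[\begin{array}{cc} 0 & X \\ Y & 0 \end{array}\right]J=\left[\begin{array}{cc} 0 & Y \\ X & 0 \end{array}\right]$, which is routine. It is worth noting that this argument is self-contained and does not actually require the cited reference \cite{11}, as the swap-unitary observation supplies the equality of the two off-diagonal numerical radii directly.
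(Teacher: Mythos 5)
Your argument is correct and is essentially identical to the one the paper gives just before the lemma: rewrite $w(X+Y)$ as the numerical radius of the symmetric off-diagonal block, split it into the two off-diagonal summands, apply the triangle inequality for $w(\cdot)$, and use the flip-unitary equivalence to identify the two terms. The only difference is that you spell out the weak unitary invariance step that the paper leaves implicit.
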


\begin{theorem}
\label{main} Let $A,B\in\mathbb{B(\mathscr{H})}$ and $0<mI\leq X$ for some positive real number $m$. Then
\begin{align*}
m\Vert \mbox{Re}(A)-\mbox{Re}(B)\Vert & \leq w(\mbox{Re}(A)X-X\mbox{Re}(B))
\\
& \leq \frac{1}{2}\sup_{\theta \in \mathbb{R}}\left\Vert (AX-XB)+e^{i\theta
}(XA-BX)\right\Vert \\
& \leq \frac{\Vert AX-XB\Vert +\Vert XA-BX\Vert }{2}\,.
\end{align*}
\end{theorem}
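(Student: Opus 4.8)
The plan is to establish the three displayed inequalities separately, observing that the outer two follow quickly from earlier results while the middle one carries the real content. The first inequality is nothing but Proposition \ref{l1} applied to the self-adjoint operators $\mbox{Re}(A)=\frac{A+A^{\ast}}{2}$ and $\mbox{Re}(B)=\frac{B+B^{\ast}}{2}$ in place of $A$ and $B$, which yields $m\Vert\mbox{Re}(A)-\mbox{Re}(B)\Vert\le w(\mbox{Re}(A)X-X\mbox{Re}(B))$ at once. The third inequality is equally routine: for each $\theta\in\mathbb{R}$ the triangle inequality for $\Vert\cdot\Vert$ combined with $\Vert e^{i\theta}(XA-BX)\Vert=\Vert XA-BX\Vert$ gives $\Vert(AX-XB)+e^{i\theta}(XA-BX)\Vert\le\Vert AX-XB\Vert+\Vert XA-BX\Vert$, and taking the supremum over $\theta$ and halving finishes it.

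The middle inequality is the crux, and I would handle it as follows. Write $P=AX-XB$ and $Q=XA-BX$, and set $T=\mbox{Re}(A)X-X\mbox{Re}(B)$. The decisive algebraic observation is that, since $X^{\ast}=X$, one has $(XA-BX)^{\ast}=A^{\ast}X-XB^{\ast}$, so that
\begin{equation*}
2T=(AX-XB)+(A^{\ast}X-XB^{\ast})=P+Q^{\ast}.
\end{equation*}
I would then invoke the identity $w(T)=\sup_{\theta\in\mathbb{R}}\Vert\mbox{Re}(e^{i\theta}T)\Vert$ established in \eqref{y3}. A short computation, using $2T=P+Q^{\ast}$ and $2T^{\ast}=P^{\ast}+Q$, shows that
\begin{equation*}
\mbox{Re}(e^{i\theta}T)=\tfrac{1}{2}\,\mbox{Re}\!\left(e^{i\theta}P+e^{-i\theta}Q\right),
\end{equation*}
since both sides equal $\tfrac{1}{4}\bigl(e^{i\theta}P+e^{-i\theta}Q+e^{-i\theta}P^{\ast}+e^{i\theta}Q^{\ast}\bigr)$.

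From $\Vert\mbox{Re}(Z)\Vert\le\Vert Z\Vert$ (valid because $\Vert Z^{\ast}\Vert=\Vert Z\Vert$) I then get $\Vert\mbox{Re}(e^{i\theta}T)\Vert\le\tfrac{1}{2}\Vert e^{i\theta}P+e^{-i\theta}Q\Vert=\tfrac{1}{2}\Vert P+e^{-2i\theta}Q\Vert$, the final step pulling out the unimodular factor $e^{i\theta}$. Taking the supremum over $\theta$ and reparametrizing $\phi=-2\theta$, which again ranges over all of $\mathbb{R}$, yields $w(T)\le\tfrac{1}{2}\sup_{\phi\in\mathbb{R}}\Vert P+e^{i\phi}Q\Vert$, precisely the middle inequality. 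I expect this middle step to be the main obstacle: the outer inequalities are immediate from Proposition \ref{l1} and the triangle inequality, whereas here everything hinges on noticing that the adjoint converts $XA-BX$ into $A^{\ast}X-XB^{\ast}$, which is exactly what allows the numerical-radius formula \eqref{y3} to be rewritten as a supremum of operator norms of the required shape.
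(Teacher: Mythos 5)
Your proposal is correct, and the outer two inequalities are handled exactly as in the paper (Proposition \ref{l1} for the first, the triangle inequality for the third). For the middle inequality, however, you take a genuinely different and more economical route. The paper first writes $w(T)=\tfrac{1}{2}w(P+Q^{\ast})$ with $P=AX-XB$, $Q^{\ast}=A^{\ast}X-XB^{\ast}$, then invokes Lemma \ref{nneeww} to pass to the block operator $w\left(\left[\begin{smallmatrix}0 & P\\ Q^{\ast} & 0\end{smallmatrix}\right]\right)$ on $\mathscr{H}\oplus\mathscr{H}$, and only then applies the identity \eqref{y3} to that $2\times 2$ operator matrix, reading off the supremum $\tfrac{1}{2}\sup_{\theta}\Vert P+e^{-2i\theta}Q\Vert$ from the norm of its real part. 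You instead apply \eqref{y3} directly to $T$ on $\mathscr{H}$, use $2T=P+Q^{\ast}$ to verify $\mbox{Re}(e^{i\theta}T)=\tfrac{1}{2}\mbox{Re}(e^{i\theta}P+e^{-i\theta}Q)$ (your computation of the common value $\tfrac{1}{4}(e^{i\theta}P+e^{-i\theta}Q+e^{-i\theta}P^{\ast}+e^{i\theta}Q^{\ast})$ checks out), and finish with the elementary bound $\Vert\mbox{Re}(Z)\Vert\leq\Vert Z\Vert$. This buys you a self-contained argument that dispenses with Lemma \ref{nneeww} and the block-matrix machinery altogether. What it gives up is the intermediate quantity $w\left(\left[\begin{smallmatrix}0 & AX-XB\\ A^{\ast}X-XB^{\ast} & 0\end{smallmatrix}\right]\right)$, which the paper's chain sandwiches between the numerical radius term and the supremum term; that extra term is a slightly sharper statement and ties Theorem \ref{main} to Theorem \ref{T2.8}, so the paper's longer route is not without purpose. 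Both proofs are valid for the theorem as stated.
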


\begin{proof}
We have
\begin{align*}
& m\Vert \mbox{Re}(A)-\mbox{Re}(B)\Vert \\
& \leq w(\mbox{Re}(A)X-X\mbox{Re}(B))\qquad \qquad \qquad \qquad \qquad
\qquad \qquad \quad (\mbox{by Proposition~}\ref{l1}) \\
& =\frac{w\left( (AX-XB)+(A^{\ast }X-XB^{\ast })\right) }{2} \\
& \leq w\left(
\begin{bmatrix}
0 & AX-XB \\
A^{\ast }X-XB^{\ast } & 0%
\end{bmatrix}
\right) \qquad \qquad \qquad \qquad \qquad \qquad (\mbox{by Lemma~}\ref%
{nneeww}) \\
& =\sup_{\theta \in \mathbb{R}}\left\Vert \mbox{Re}\left(
\begin{bmatrix}
0 & e^{i\theta }(AX-XB) \\
e^{i\theta }(A^{\ast }X-XB^{\ast }) & 0%
\end{bmatrix}
\right) \right\Vert \qquad \qquad \qquad \qquad (\mbox{by~}\eqref{y3}) \\
& ={\tiny \frac{1}{2}\sup_{\theta \in \mathbb{R}}\left\Vert
\begin{bmatrix}
0 & e^{i\theta }[(AX-XB)+e^{-2i\theta }(A^{\ast }X-XB^{\ast })^{\ast }] \\
e^{-i\theta }[(AX-XB)+e^{-2i\theta }(A^{\ast }X-XB^{\ast })^{\ast }]^{\ast }
& 0%
\end{bmatrix}
\right\Vert } \\
& =\frac{1}{2}\sup_{\theta \in \mathbb{R}}\left\Vert (AX-XB)+e^{-2i\theta
}(XA-BX)\right\Vert \\
& =\frac{1}{2}\sup_{\theta \in \mathbb{R}}\left\Vert (AX-XB)+e^{i\theta
}(XA-BX)\right\Vert \\
& \leq \frac{\Vert AX-XB\Vert +\Vert XA-BX\Vert }{2}\,.
\end{align*}
\end{proof}
 
\medskip


\end{document}